\documentclass{amsart}[12pt]
\usepackage{amsmath}
\usepackage{amssymb}
\usepackage{amsthm}
\usepackage{verbatim}
\usepackage[pdftex]{graphics}

\newtheorem{theorem}{Theorem}

\setlength{\textwidth}{16cm}
\setlength{\oddsidemargin}{0.25cm}
\setlength{\evensidemargin}{0.25cm}
\setlength{\topmargin}{1.2cm}

\title[Hardy-Littlewood inequalities] { Hardy-Littlewood inequalities for  norms\\ of positive operators on sequence  spaces }
\author{Miguel Lacruz}
\address{Departamento de An\'alisis Matem\'atico,  Universidad de Sevilla, Avenida Reina Mercedes, 41012 Seville, Spain}
\email{lacruz@us.es}
\thanks{This research was partially supported by Junta de Andaluc{\'\i}a under Proyecto de Excelencia FQM-3737, and by Ministerio de Ciencia e Innovaci\'on under Proyecto de Investigaci\'on  MTM2009-08934.} 
\subjclass[2000]{47B37, 47A68}
\keywords{Factorization; Positive operators; Sequence spaces}
\date{\today}                                           
\begin{document}
\renewcommand{\thefootnote}{}
\begin{abstract}
We consider estimates of Hardy and Littlewood for norms of operators on sequence spaces, and we apply a factorization result of Maurey to obtain  improved estimates and simplified proofs for the special case of a positive operator.
\end{abstract}

\maketitle

In 1934, Hardy and Littlewood \cite{HL}, using powerful but technically difficult methods, extended results of Littlewood \cite{littlewood} and Toeplitz  \cite{toeplitz} to give   lower bounds for norms of bilinear forms on sequence spaces. In 2001, Osikiewicz and Tonge \cite{OT}, exploiting a deep interpolation theorem, obtained relatively simple proofs for the inequalities of Hardy and Littlewood. The aim of this paper is to provide  improved estimates and simplified proofs for the special case of a matrix operator with non-negative entries (we will call such operators positive).

If \(1 \leq p < \infty,\) we write \(\ell_p\) for the  complex vector space of all complex sequences \(x=(x_k)\) such that
\[
\|x\|_p \colon = \left ( \sum_k |x_k|^p \right )^{1/p} < \infty.
\]
We also write \(c_0\) for the space of all null complex sequences  \(x=(x_k)\) with the norm
\[
\|x\|_\infty \colon = \sup_k |x_k|.
\]
It turns out that  \(\ell_p\) and \(c_0\)  are Banach spaces under the indicated norms.  If \(X\) and \(Y\) are Banach spaces then we  write \({\mathcal B}(X,Y)\) for the complex vector space of all bounded linear operators \(A:X \to Y.\) This is a Banach space under the operator norm
\[
\|A\| \colon = \sup \{ \|Ax\| \colon x \in X, \|x\| \leq 1\}.
\]
When \(1 \leq p,q \leq \infty,\) it is convenient to  write \(\|A\|_{p,q}\) for the norm of an operator \(A \in {\mathcal B}(\ell_p,\ell_q).\) Every operator \(A \in {\mathcal B}(\ell_p,\ell_q)\) has a matrix representation \(A=(a_{j,k})\) with respect to the usual  bases.  The dual of a Banach space \(X\) is denoted by \(X^\ast.\)  If \(1 \leq p < \infty,\) it is a  standard fact that \(c_0^\ast=\ell_1\) and that  \(\ell_p^\ast=\ell_{p^\ast},\) where the conjugate index \(p^\ast\) is given by the expression
\[
\frac{1}{p} + \frac{1}{p^\ast} =1.
\]

Now the theorems of Hardy and Littlewood can be stated as follows:

\begin{theorem}[Hardy and Littlewood, \cite{HL}] 
\label{hardy}
Let \(1 \leq q \leq 2 \leq p \leq \infty\) and set \(1/r=1/q-1/p.\) There is an absolute constant  \(M=M(p,q)>0\) such that for every  \(A \in {\mathcal B}(\ell_p,\ell_q)\) we have
\[
\left [ \sum_j \left ( \sum_k |a_{j,k}|^2 \right )^{r/2} \right ]^{1/r}  \leq  M \| A\|_{p,q}, \qquad {\rm and}
\]
\[
\left [ \sum_k \left ( \sum_j |a_{j,k}|^2 \right )^{r/2} \right ]^{1/r}  \leq M \| A\|_{p,q}.
\]
\end{theorem}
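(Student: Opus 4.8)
The plan is to reduce the two-parameter family to a handful of extreme cases and recover the whole range $1\le q\le 2\le p\le\infty$ by complex interpolation, exploiting that the relation $1/r=1/q-1/p$ is preserved along interpolation segments. First I would dispose of the second (column) inequality by passing to the adjoint: the Banach-space adjoint \(A^\ast\colon \ell_{q^\ast}\to\ell_{p^\ast}\) satisfies \(\|A^\ast\|_{q^\ast,p^\ast}=\|A\|_{p,q}\), it has matrix \((\overline{a_{k,j}})\), its parameters again satisfy \(1\le p^\ast\le 2\le q^\ast\), and \(1/p^\ast-1/q^\ast=1/q-1/p=1/r\). Since the rows of \(A^\ast\) are, up to conjugation, the columns of \(A\), the column inequality for \(A\) is exactly the row inequality for \(A^\ast\); hence it suffices to prove the first inequality for all admissible \((p,q)\).

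Next I would establish the row inequality at three corners of the rectangle \([0,1/2]\times[1/2,1]\) in the \((1/p,1/q)\)-plane, each by an elementary argument. At \((p,q)=(2,2)\), where \(r=\infty\), the \(\ell_2\)-norm of the \(j\)-th row is \(\|A^\ast e_j\|_2\le\|A\|_{2,2}\), so \(\sup_j\|a_{j,\cdot}\|_2\le\|A\|_{2,2}\). At \((p,q)=(\infty,2)\), where \(r=2\), I would average over Rademacher signs \(\varepsilon=(\varepsilon_k)\): since \(\mathbb{E}_\varepsilon|\sum_k a_{j,k}\varepsilon_k|^2=\sum_k|a_{j,k}|^2\) and \(\|\varepsilon\|_\infty=1\), summing over \(j\) gives \(\sum_{j,k}|a_{j,k}|^2=\mathbb{E}_\varepsilon\|A\varepsilon\|_2^2\le\|A\|_{\infty,2}^2\). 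At \((p,q)=(\infty,1)\), where \(r=1\), the same randomization combined with Khinchin's inequality \(\|a_{j,\cdot}\|_2\le\sqrt2\,\mathbb{E}_\varepsilon|\sum_k a_{j,k}\varepsilon_k|\) yields \(\sum_j\|a_{j,\cdot}\|_2\le\sqrt2\,\mathbb{E}_\varepsilon\|A\varepsilon\|_1\le\sqrt2\,\|A\|_{\infty,1}\) (for complex entries one applies the real inequality to real and imaginary parts, enlarging the absolute constant). By the adjoint reduction, the corner \((\infty,2)\) also delivers \((2,1)\) for free, so the row inequality holds at all four corners, whose convex hull is the whole rectangle.

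It then remains to interpolate. I would phrase this on the bilinear pairing \(\langle A,B\rangle=\sum_{j,k}a_{j,k}\overline{b_{j,k}}\), where \(\|A\|_{\ell_r(\ell_2)}=\sup\{\,\mathrm{Re}\,\langle A,B\rangle:\|B\|_{\ell_{r^\ast}(\ell_2)}\le1\,\}\), and apply Stein's analytic-family theorem along each interpolation segment. The heart of the matter, and the step I expect to be the main obstacle, is the construction of the analytic family: one must deform \(A\) by the classical Riesz--Thorin entry substitutions built from the extremal vectors for \(\|A\|_{p_\theta,q_\theta}\), and simultaneously deform the dual matrix \(B\) built from the extremal element of \(\ell_{r_\theta^\ast}(\ell_2)\), with exponent paths chosen so that on each boundary edge the pairing collapses to one of the three elementary corner inequalities applied to a matrix whose relevant norm equals \(\|A\|_{p_\theta,q_\theta}\). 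The three-lines lemma then produces \(M(p,q)\) as the corresponding geometric mean of the corner constants. Verifying these boundary estimates for the family, together with handling the \(c_0/\ell_\infty\) endpoints by first restricting to finitely supported matrices and passing to the limit, is the delicate, technical core of the argument; this is exactly the ``deep interpolation'' on which the classical proofs rest, and it is what the positive-operator approach of this paper is designed to circumvent.
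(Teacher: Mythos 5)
Your corner estimates are correct and classical: the $(2,2)$ case is immediate from $\|A^\ast e_j\|_2\le\|A\|_{2,2}$, the $(\infty,2)$ case follows from orthogonality of the Rademacher signs, the $(\infty,1)$ case is Littlewood's argument via Khinchin's inequality, and the adjoint reduction of the column inequality to the row inequality is sound (modulo the routine passage between $c_0$ and $\ell_\infty$ via finite sections, which you flag). The genuine gap is that the entire difficulty of the theorem lives in the step you defer. There is no off-the-shelf interpolation theorem that passes from these four corners to an arbitrary interior point $(1/p,1/q)$, and the Stein analytic-family scheme you sketch runs in the wrong direction: Stein interpolation assumes the given family $A_z$ satisfies the \emph{endpoint} bounds on the boundary lines and concludes an intermediate bound, whereas your only hypothesis is that the single operator $A$ is bounded at the one intermediate point $(p,q)$. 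An operator bounded from $\ell_p$ to $\ell_q$ with $2<p<\infty$ need not be bounded from $c_0$ to $\ell_q$ or from $\ell_2$ to $\ell_q$ at all, so the boundary estimates required for a family obtained by ``entry substitutions'' on $A$ are simply not available from $\|A\|_{p,q}$. What is actually needed is an interpolation theorem for the scale of operator spaces $\mathcal{B}(\ell_p,\ell_q)$ themselves (equivalently, a multilinear interpolation of the pairing in the test-vector and Rademacher variables with $A$ held fixed); this is exactly the ``deep interpolation theorem'' on which the Osikiewicz--Tonge proof rests, and your outline does not supply it. As written, the proposal establishes the four corners and asserts the rest.

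It is also worth noting that your target is more ambitious than what the paper proves: the paper treats only matrices with non-negative entries, where Maurey's factorization $A=DB$, with $D=\mathrm{diag}(d)$, $d\in\ell_r$, and $B\in\mathcal{B}(\ell_p,\ell_p)$, reduces the row inequality to the elementary chain $\sup_j\|B^\ast e_j\|_2=\|B\|_{2,\infty}\le\|B\|_{p,p}$, yielding the constant $M=1$ and bypassing interpolation and randomization entirely. If you want the general complex case you must either import the Osikiewicz--Tonge interpolation theorem as a black box or reconstruct Hardy and Littlewood's original argument; if positivity is acceptable, the factorization route is both shorter and sharper.
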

\begin{theorem}[Hardy and Littlewood, \cite{HL}] 
\label{little}
Let \(1 \leq q \leq p \leq \infty,\) and set \(1/r=1/q-1/p,\) \(1/s=1/(2r)+1/4.\) There is an absolute constant \(M=M(p,q)>0\) such that for every  \(A \in {\mathcal B}(\ell_p,\ell_q)\) we have
\begin{enumerate}
\item[(i)] If \(r \geq 2\) then 
\[
\left ( \sum_{j,k} |a_{j,k}|^r \right )^{1/r} \leq M \|A\|_{p,q}.
\]
\item[(ii)] If \(r \leq 2\) then
\[
\left ( \sum_{j,k} |a_{j,k}|^s \right )^{1/s} \leq M \|A\|_{p,q}.
\]
\end{enumerate}
\end{theorem}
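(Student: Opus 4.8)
The plan is to derive Theorem~\ref{little} entirely from Theorem~\ref{hardy}, using only the embedding $\ell_2\hookrightarrow\ell_r$ (valid for $r\ge2$), duality, and interpolation, with all constants depending on $p,q$ alone. Throughout write $R=\left(\sum_j(\sum_k|a_{j,k}|^2)^{r/2}\right)^{1/r}$ and $C=\left(\sum_k(\sum_j|a_{j,k}|^2)^{r/2}\right)^{1/r}$ for the row and column mixed norms, so that Theorem~\ref{hardy} reads $\max\{R,C\}\le M\|A\|_{p,q}$ whenever $1\le q\le2\le p\le\infty$. Two elementary facts will be used repeatedly: the full-sum norm $(\sum_{j,k}|a_{j,k}|^t)^{1/t}$ is invariant under transposing the matrix, and the adjoint $A^\ast\in\mathcal{B}(\ell_{q^\ast},\ell_{p^\ast})$ realises this transpose with $\|A^\ast\|_{q^\ast,p^\ast}=\|A\|_{p,q}$ and the same index, since $1/p^\ast-1/q^\ast=1/r$.

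I would dispose of part (ii) first, because its hypotheses already confine us to the range of Theorem~\ref{hardy}: from $1/q-1/p\ge1/2$ one checks that necessarily $q\le2\le p$, so both $R$ and $C$ are available. The target exponent satisfies $1/s=\tfrac12\cdot\tfrac1r+\tfrac12\cdot\tfrac12$, which is exactly the index obtained by interpolating the mixed-norm spaces $\ell_r(\ell_2)$ and $\ell_2(\ell_r)$ at $\theta=\tfrac12$ in both the outer and the inner variable. Bilinear interpolation of these mixed-norm spaces (equivalently, two successive applications of H\"older's inequality splitting each $|a_{j,k}|^s$ into a factor summed over $k$ against $R$ and a factor summed over $j$ against $C$) yields
\[
\left(\sum_{j,k}|a_{j,k}|^s\right)^{1/s}\le R^{1/2}C^{1/2}\le M\|A\|_{p,q},
\]
which is (ii).

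For part (i) the left-hand side is the transpose-invariant full $\ell_r$ norm. When $q\le2\le p$ the inner embedding $\ell_2\hookrightarrow\ell_r$ gives $(\sum_k|a_{j,k}|^r)^{1/r}\le(\sum_k|a_{j,k}|^2)^{1/2}$ for each $j$, hence $(\sum_{j,k}|a_{j,k}|^r)^{1/r}\le R\le M\|A\|_{p,q}$. The ranges $q\le p<2$ and $2<q\le p$ are interchanged by passing to $A^\ast$, so it suffices to treat $2<q\le p$, where $r\ge2$ holds automatically. I would interpolate between the two endpoints $q=2$, where the previous step gives the $\ell_{r_0}$ bound with $1/r_0=\tfrac12-\tfrac1p$, and the diagonal $q=p$, where $1/r=0$ and the estimate degenerates to the trivial $\sup_{j,k}|a_{j,k}|=\sup_{j,k}|\langle Ae_k,e_j\rangle|\le\|A\|_{p,p}$. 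The complex interpolation space between $\ell_{r_0}$ and $\ell_\infty$ at parameter $\theta$ is $\ell_r$ with $1/r=(1-\theta)(\tfrac12-\tfrac1p)=1/q-1/p$, precisely the index we need.

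I expect the last interpolation to be the main obstacle, and for a reason that must be addressed honestly: a single fixed $A$ bounded on $\ell_p\to\ell_q$ at an interior $q$ is not automatically controlled at the endpoints $q=2$ and $q=p$, so one cannot simply quote Riesz--Thorin in the convenient direction. The correct device is a Stein-type analytic family of bilinear forms, with the moduli of the matrix entries raised to a complex power on the strip, arranged so that the boundary values reproduce the two endpoint estimates while the interior value is governed by $\|A\|_{p,q}$; the maximum principle then delivers the intermediate coefficient bound. Setting up this family and verifying the boundary norms is exactly the ``deep interpolation'' ingredient alluded to in the introduction, and it is the only step that is not a one-line reduction.
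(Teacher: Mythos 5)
Your proposal and the paper part ways at the outset: the paper does not reprove Theorem \ref{little} in general, but only for matrices with non-negative entries, where Maurey's factorization (Theorem \ref{maurey1}) writes \(A=DB\) with \(D={\rm diag}(d)\), \(d\in\ell_r\), and \(B\in{\mathcal B}(\ell_p,\ell_p)\); the coefficient estimate then falls out of a two-line computation with constant \(M=1\), and in fact yields inequality (i) even when \(r\le2\), which is strictly stronger than (ii). Your plan --- deduce Theorem \ref{little} from Theorem \ref{hardy} by mixed-norm H\"older, duality and interpolation --- is essentially the Osikiewicz--Tonge route for general operators, so it is a genuinely different and more ambitious argument. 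Within it, several pieces are sound: the hypothesis \(r\le2\) does force \(q\le2\le p\), and the bound \(\bigl(\sum_{j,k}|a_{j,k}|^s\bigr)^{1/s}\le R^{1/2}C^{1/2}\) with \(2/s=1/r+1/2\) is the standard two-fold Cauchy--Schwarz/H\"older mixed-norm inequality (Littlewood's \(4/3\) argument when \(r=1\)), so (ii) follows from Theorem \ref{hardy}. Likewise the sub-case \(q\le2\le p\) of (i) via \(\ell_2\hookrightarrow\ell_r\), and the duality reduction of \(q\le p\le2\) to \(2\le q\le p\), are correct.

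The gap is exactly where you flag it, and it is not a detail that can be deferred: for \(2\le q\le p\) you propose to interpolate, for a single fixed \(A\), between endpoint estimates at \(q=2\) and \(q=p\), and you correctly observe that Riesz--Thorin is unavailable because boundedness of \(A\) at the interior exponent gives no control at either endpoint. The ``Stein-type analytic family'' that is supposed to rescue this is never constructed: you do not specify the family, verify the boundary estimates, or explain why the interior value is governed by \(\|A\|_{p,q}\) rather than by endpoint norms you do not possess. That construction (the ``deep interpolation theorem'' that Osikiewicz and Tonge import) is the entire content of the hard case, so as written your proof of (i) is complete only on the strip \(q\le2\le p\) and its dual. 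If you are willing to restrict to positive operators, as the paper does, the factorization route closes every case of (i) in a few lines, with \(M=1\), and without any case split; for general \(A\) you must actually supply the analytic family or an equivalent abstract interpolation theorem.
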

\noindent
Now we prove Theorem \ref{hardy} and Theorem \ref{little} for the special case of an operator with non negative entries. The key for our approach is a factorization theorem of Maurey \cite{maurey} that can be stated as follows:
\begin{theorem}[Maurey, \cite{maurey}]
\label{maurey1}
Let \(0 < q \leq p \leq \infty\) with \(p \geq 1,\) and set \(1/r=1/q-1/p.\) If  \(A \in {\mathcal B}(\ell_p,\ell_q)\) has non negative entries then there is a factorization \(A=DB,\) where \(B \in {\mathcal B}(\ell_p,\ell_p)\) has non negative entries and \(D \in {\mathcal B}(\ell_p,\ell_q)\) is a diagonal operator with non negative entries, say \(D={\rm diag}(d),\) with \(d \in \ell_r.\) Moreover,
\[
\|A\|_{p,q}= \inf \|d\|_r \cdot \|B\|_{p,p},
\]
where the infimum is taken over all  possible factorizations.
\end{theorem}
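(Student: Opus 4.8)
My plan is to prove the asserted equality as two inequalities, the second of which carries all the weight. The inequality \(\|A\|_{p,q}\le\|d\|_r\cdot\|B\|_{p,p}\) for \emph{any} admissible factorization is immediate from H\"older: since \(1/q=1/r+1/p\), for \(x\in\ell_p\) we have \(\|Ax\|_q=\|d\cdot Bx\|_q\le\|d\|_r\,\|Bx\|_p\le\|d\|_r\,\|B\|_{p,p}\,\|x\|_p\), and taking the infimum gives \(\|A\|_{p,q}\le\inf\|d\|_r\cdot\|B\|_{p,p}\). The real content is the reverse: to \emph{construct} a factorization with \(\|d\|_r\cdot\|B\|_{p,p}\le\|A\|_{p,q}=:C\). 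I would normalize \(C=1\), observe that positivity of \(A\) lets us restrict all norm computations to \(x\ge0\) (because \(|Ax|\le A|x|\) coordinatewise), and first treat finite matrices, passing to the infinite case by a limiting argument at the end.

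The first key step is an explicit pointwise optimum. For a \emph{fixed} \(x\ge0\), the exponents \(p/q\) and \(r/q\) are conjugate (as \(1/p+1/r=1/q\)), so writing \(b_{j,k}=a_{j,k}/d_j\) and applying H\"older to \(\|Ax\|_q^q=\sum_j\big((Ax)_j/d_j\big)^q d_j^{\,q}\) gives \(\|Ax\|_q\le\|Bx\|_p\,\|d\|_r\), with equality for the choice \(d_j\propto(Ax)_j^{q/r}\). Hence for every fixed \(x\),
\[
\min_{\|d\|_r\le1}\Big(\sum_j\frac{(Ax)_j^{\,p}}{d_j^{\,p}}\Big)^{1/p}=\|Ax\|_q .
\]
The obstacle is that this optimal \(d\) depends on \(x\), whereas the factorization demands a \emph{single} \(d\) that is simultaneously near-optimal for all \(x\) in the unit ball; overcoming this is the heart of the proof.

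To produce such a uniform weight I would linearize and invoke a minimax theorem. Put \(e_j=d_j^{-p}\), so that \(\{\|d\|_r\le1\}\) corresponds to the convex set \(\mathcal E=\{e\ge0:\sum_j e_j^{-r/p}\le1\}\), and let \(K\) be the closed convex hull of the profiles \(\{((Ax)_j^{\,p})_j:\ x\ge0,\ \|x\|_p\le1\}\). The pairing \((v,e)\mapsto\langle v,e\rangle\) is bilinear on \(K\times\mathcal E\); a routine coercivity argument confines the relevant \(e\) to a compact convex subset (the supremum tends to \(+\infty\) along coordinates meeting the support of \(A\)), after which the minimax theorem applies and yields
\[
\inf_{e\in\mathcal E}\ \sup_{v\in K}\ \langle v,e\rangle=\sup_{v\in K}\ \inf_{e\in\mathcal E}\ \langle v,e\rangle .
\]
By the previous step the right-hand side equals \(\sup_{x\ge0,\ \|x\|_p\le1}\|Ax\|_q^{\,p}=\|A\|_{p,q}^{\,p}=1\). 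An optimal \(e^\ast\) then defines \(d=(e^\ast)^{-1/p}\) with \(\|d\|_r\le1\) and \(B={\rm diag}(d)^{-1}A\) with \(\|B\|_{p,p}^{\,p}=\sup_{v\in K}\langle v,e^\ast\rangle=1\), since a linear functional attains its supremum over \(K\) at the generating profiles. (Equivalently, one may separate \(K\) from the polar of \(\mathcal E\) by Hahn--Banach.) This settles the finite-dimensional case, and combined with the easy inequality it shows the infimum is attained.

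Finally, for an infinite matrix I would apply the finite case to the \(N\times N\) sections \(A_N\), obtaining weights \(d^{(N)}\) in the unit ball of \(\ell_r\) with \(\|B^{(N)}\|_{p,p}\le\|A_N\|_{p,q}\le\|A\|_{p,q}\), extract a coordinatewise limit \(d\) by a diagonal/subsequence argument (Fatou gives \(\|d\|_r\le1\)), set \(b_{j,k}=a_{j,k}/d_j\), and verify \(\|B\|_{p,p}\le\liminf_N\|B^{(N)}\|_{p,p}\le\|A\|_{p,q}\) by lower semicontinuity of the norm (Fatou once more). The hard part throughout is exactly the existence of a uniform weight, namely the minimax swap together with this passage to infinite dimensions; the one genuinely delicate point in the limit is the treatment of degenerate rows with \(d_j=0\), where one must check that \(a_{j,k}=0\) so that \(B\) is well defined. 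The boundary cases \(r=\infty\) (that is \(q=p\), with \(D\) a bounded diagonal and \(d\in\ell_\infty\)) and \(p=\infty\) are handled by the same scheme with the evident modification of the H\"older step.
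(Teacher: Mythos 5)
The paper does not actually prove this statement: it is quoted as a known result from Maurey's monograph \cite{maurey}, so there is no in-paper argument to compare yours against. Judged on its own, your strategy --- the easy direction by H\"older, the hard direction by linearizing in $e_j=d_j^{-p}$ and invoking a minimax theorem, then a diagonal limit over finite sections --- is the standard modern route to Maurey-type factorization and is sound in outline. The pointwise computation $\min_{\|d\|_r\le1}\sum_j(Ax)_j^p d_j^{-p}=\|Ax\|_q^p$ is correct, and you rightly flag the degenerate rows and the boundary cases $r=\infty$, $p=\infty$.

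There is, however, one genuine gap, and it sits at the heart of the argument. After the minimax swap you assert that $\sup_{v\in K}\inf_{e\in\mathcal E}\langle v,e\rangle$ equals the supremum of $\inf_e\langle v,e\rangle$ over the \emph{generating} profiles $v=((Ax)_j^p)_j$, i.e.\ equals $\sup_x\|Ax\|_q^p=1$. But $v\mapsto\inf_e\langle v,e\rangle$ is concave (an infimum of linear functions), so its supremum over the convex hull $K$ may strictly exceed its supremum over the generators; your remark that a \emph{linear} functional attains its supremum at the generators applies to the other order of $\sup$/$\inf$, not to this one. To close the gap you must show that every convex combination $v=\sum_i\lambda_i\bigl((Ax^{(i)})^p\bigr)$ is dominated coordinatewise by $(Ay)^p$ with $y_k=\bigl(\sum_i\lambda_i (x^{(i)}_k)^p\bigr)^{1/p}$, which satisfies $\|y\|_p\le1$; this is Minkowski's inequality $\bigl(\sum_i\lambda_i(\sum_k a_{j,k}x^{(i)}_k)^p\bigr)^{1/p}\le\sum_k a_{j,k}\bigl(\sum_i\lambda_i (x^{(i)}_k)^p\bigr)^{1/p}$, and it is precisely here that the hypotheses $p\ge1$ and $a_{j,k}\ge0$ do their work. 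As written, your hard direction never uses $p\ge1$, which is a symptom of the missing step (the theorem genuinely fails without it). Once this lemma is inserted, $\inf_e\langle v,e\rangle\le\|Ay\|_q^p\le1$ for all $v\in K$ and the rest of your argument goes through.
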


\begin{proof}[Proof of Theorem \ref{hardy} for a positive operator] Let \(A=(a_{j,k})\) with \(a_{j,k} \geq 0\) for all \(j,k.\) Take a factorization \(A=DB,\) where \(B \in {\mathcal B}(\ell_p,\ell_p)\) and \(D={\rm diag}(d),\) with \(d \in \ell_r.\)  We have \(a_{j,k}=d_j b_{j,k},\) so that
\begin{eqnarray*}
\left [ \sum_j \left ( \sum_k a_{j,k}^2 \right )^{r/2} \right ]^{1/r}  & = & \left [ \sum_j \left ( \sum_k d_j^2 b_{j,k}^2 \right )^{r/2} \right ]^{1/r}\\  
& = &  \left [ \sum_j d_j^r \left (\sum_k b_{j,k}^2 \right )^{r/2} \right ]^{1/r}\\
& \leq & \left ( \sum_j d_j^r \right )^{1/r} \sup_j  \left (\sum_k b_{j,k}^2 \right )^{1/2}\\
& = & \|d \|_r  \cdot \sup_j \|B^\ast e_j \|_2\\
& = & \|d\|_r \cdot \|B^\ast\|_{1,2}\\
& = & \|d\|_r \cdot \|B\|_{2,\infty}\\
& \leq &  \|d\|_r \cdot \|B\|_{p,p},
\end{eqnarray*}
and  taking the infimum over all    possible factorizations, we get the first inequality with   constant \(M=1.\) The second inequality follows easily from the first one by using duality. Indeed, set \(1/p^\ast = 1-1/p\) and  \(1/q^\ast = 1-1/q,\) so that \(p^\ast \leq 2 \leq q^\ast.\)  Let  \(A \in {\mathcal B}(\ell_p,\ell_q),\)  say \(A=(a_{j,k})\) with \(a_{j,k} \geq 0\) for all \(j,k,\) and notice that the adjoint operator \(A^\ast  \in {\mathcal B}(\ell_{q^\ast},\ell_{p^\ast})\) has the matrix representation \(A^\ast =(a_{k,j}).\) Hence,
\[
\left [ \sum_k \left ( \sum_j a_{j,k}^2 \right )^{r/2} \right ]^{1/r} \leq \|A^\ast\|_{q^\ast, p^\ast} = \|A\|_{p,q},
\]
as we wanted.
\end{proof}

\begin{proof}[Proof of Theorem \ref{little} for a positive operator] Let \(A=(a_{j,k})\) with \(a_{j,k} \geq 0\) for all \(j,k.\) Take a factorization \(A=DB,\) where \(B \in {\mathcal B}(\ell_p,\ell_p)\) and \(D={\rm diag}(d),\) with \(d \in \ell_r.\)  Let \(1/r^\ast = 1-1/r.\) Since \(q \geq 1,\) we have \(r^\ast \leq p.\)  We have \(a_{j,k}=d_j b_{j,k},\) so that
\begin{eqnarray*}
\left ( \sum_j \sum_k a_{j,k}^r \right )^{1/r} & = & \sum_j \left ( d_j^r  \sum_k b_{j,k}^r \right )^{1/r}\\
& \leq & \left ( \sum_j d_j^r \right )^{1/r}   \sup_j \left ( \sum_k b_{j,k} ^r\right )^{1/r}\\
& = & \|d \|_r  \cdot \sup_j \|B^\ast e_j \|_r\\
& = & \|d\|_r \cdot \|B^\ast\|_{1,r}\\
& = & \|d\|_r \cdot \|B\|_{r^\ast,\infty}\\
& \leq & \|d\|_r \cdot \|B\|_{p,p}
\end{eqnarray*}
and  taking the infimum over all   possible factorizations, we get the  inequality (i)  with  the constant \(M=1.\)   Notice that we have shown that  inequality (i) holds regardless of whether \(r \geq 2\) or \(r \leq 2.\) If \(r \leq 2\) then \(r \leq s \leq 2,\) so that we have obtained an improvement on Theorem \ref{little}, namely, that the left hand side in  inequality (ii) can be replaced by the left hand side in  inequality (i).
\end{proof}

\end{document}